\documentclass{amsproc}%
\usepackage{amsfonts}
\usepackage{amsmath}
\usepackage{amssymb}
\usepackage{graphicx}
\usepackage{hyperref}%
\setcounter{MaxMatrixCols}{30}
\providecommand{\U}[1]{\protect\rule{.1in}{.1in}}
\theoremstyle{plain}
\newtheorem{acknowledgement}{Acknowledgement}

\newtheorem{corollary}{Corollary}

\newtheorem{definition}{Definition}
\newtheorem{example}{Example}

\newtheorem{lemma}{Lemma}

\newtheorem{proposition}{Proposition}
\newtheorem{remark}{Remark}

\newtheorem{theorem}{Theorem}
\numberwithin{equation}{section}
\begin{document}
\title[{\normalsize On Golomb topology of modules over commutative rings}]{{\normalsize On Golomb topology of modules over commutative rings}}
\author{U\u{g}ur Yi\u{g}it}
\address{Istanbul Medeniyet University, Department of Mathematics, 34700
\"{U}sk\"{u}dar, \.{I}stanbul, T\"{u}rk\.{I}ye.}
\email{ugur.yigit@medeniyet.edu.tr}
\author{Suat Ko\c{c}}
\address{Istanbul Medeniyet University, Department of Mathematics, 34700
\"{U}sk\"{u}dar, \.{I}stanbul, T\"{u}rk\.{I}ye.}
\email{suat.koc@medeniyet.edu.tr}
\author{\"{U}nsal Tekir}
\address{Marmara University, Department of Mathematics, 34720 Kad\i k\"{o}y,
\.{I}stanbul, T\"{u}rk\.{I}ye.}
\email{utekir@marmara.edu.tr}
\subjclass[2000]{13A15}
\keywords{Golomb topology, meet irreducible module, strongly irreducible submodule}

\begin{abstract}
In this paper, we associate a new topology to a nonzero unital module $M$ over
a commutative $R$, which is called Golomb topology of the $R$-module $M$. Let
$M\ $be an\ $R$-module and $B_{M}$ be the family of coprime cosets $\{m+N\}$
where $m\in M$ and $N\ $is a nonzero submodule of $M\ $such that $N+Rm=M$. We
prove that if $M\ $is a meet irreducible multiplication module or $M\ $is a
meet irreducible finitely generated module in which every maximal submodule is
strongly irreducible, then $B_{M}\ $is \ the basis for a topology on
$M\ $which is denoted by $\widetilde{G(M)}.$ In particular, the subspace
topology on $M-\{0\}$ is called the Golomb topology of the $R$-module $M\ $and
denoted by $G(M)$.\ We investigate the relations between topological
properties of $G(M)\ $and algebraic properties of $M.\ $In particular, we
characterize some important classes of modules such as simple modules,
Jacobson semisimple modules in terms of Golomb topology.

\end{abstract}
\maketitle

\section{Introduction}

Furstenberg topology on the set $\mathbb{Z}$ of integers is established by
Furstenberg \cite{Furs55} in attempt to give a topological proof of infinity
of prime numbers. It is defined by taking $\beta=\{a\mathbb{Z}+b|a,b\in
\mathbb{Z}\ \text{and}\ a>0\}$ as a basis. Golomb topology \cite{Golo59} on
the set $\mathbb{Z}_{+}$ of positive integers is constructed by taking the
collection $\beta=\{a\mathbb{N}+b|a,b>0\ $and$\ \gcd(a,b)=1\}$ as a basis. It
is defined to provide an example of a countably infinite connected Hausdorff
space. Kirch \cite{Kirc69} generalizes this by taking the collection
$\beta=\{p\mathbb{N}+b|b>0\ \text{and}\ p\ \text{prime not diving }b\}$ as a
subbase to give a countable, locally countable connected Hausdorff space.
Kirch topology is coarser than the Golomb topology, which means that every
open set in the Kirch topology is also open in the Golomb topology. An
analogous Golomb topology on any integral domain is established first by
Knopfmacher and Porubsk \cite{KP97} and subsequently developed by Clark
\cite{Clar17}. Clark \cite{Clar17} studies it to demonstrate that $R$ has
infinitely many nonassociate irreducibles, when $R$ is a semiprimitive domain
and not a field where every nonzero nonunit element has at least one
irreducible divisor. Some properties of these Golomb topologies is studied at
length by Clark et al. in \cite{CLP18}.

In this article, we give a generalization of the Golomb topology on modules.
In order to generalize the Golomb topology over modules, the features that a
module must provide are discussed. In the last section, we look into the
connections between the algebraic properties of a module $M$ and topological
properties of the Golomb topology $G(M)$ over module $M$. Specifically, we
describe several significant module classes in terms of Golomb topology,
including simple modules and Jacobson semisimple modules. Also, we provide an
example which shows that a module with the Golomb topology is not a
topological module.\bigskip

\section{The Golomb Topology on Modules}

In this paper, all rings under consideration are assumed to be commutative
with nonzero unity and all modules are nonzero unital. Let $R\ $always denote
such a ring and $M\ $denote such an $R$-module. For every submodule $N\ $of
$M,\ $the residual of $N\ $by $M\ $is defined as $(N:M)=\{r\in R:rM\subseteq
N\}$ \cite{Sharp}.\ Recall from \cite{Atani} that a submodule $N$ of $M\ $is
said to be a \textit{strongly irreducible} if whenever $K\cap L\subseteq N$
for some submodule $K,L\ $of $M,\ $then either $K\subseteq N$ or $L\subseteq
N.\ $An $R$-module $M\ $is said to be a \textit{meet-irreducible module} if
intersection of any two nonzero submodule is always nonzero \cite{Kasch}. Note
that $M\ $is meet-irreducible if and only if the zero submodule is a strongly
irreducible submodule of\ $M.\ $A proper submodule $N\ $of an $R$-module
$M\ $is said to be a \textit{prime submodule} if whenever $am\in N\ $for some
$a\in R$ and $m\in M,\ $then $a\in(N:M)\ $or $m\in N$ \cite{Sharp}. It is
known that if $N\ $is prime, then $(N:M)\ $is a prime ideal of $R.\ $Also it
is well-known that every prime ideal is strongly irreducible in a ring.
However, this is not true for modules over rings. See the following example.

\begin{example}
Consider $%
\mathbb{Z}
$-module $%
\mathbb{Z}
\oplus%
\mathbb{Z}
=M\ $and submodule $N=2
\mathbb{Z}
\oplus2
\mathbb{Z}
$.\ Then $N\ $is clearly a prime submodule of $M.\ $Choose, $K=
\mathbb{Z}
\oplus2
\mathbb{Z}
$ and $L=2
\mathbb{Z}
\oplus%
\mathbb{Z}
.\ $Since $K\cap L\subseteq N,\ K\nsubseteq N$ and $L\nsubseteq N,\ $it
follows that $N\ $is not a strongly irreducible submodule of $M.\ $
\end{example}

Recall from \cite{Barn} that an $R$-module $M\ $is said to be a
\textit{multiplication module} if every submodule $N\ $of $M$ has the form
$IM\ $for some ideal $I\ $of $R.\ $One can easily see that $M\ $is a
multiplication module if and only if $N=(N:M)M\ $for every submodule $N\ $of
$M\ $\cite{Smith}. Also, every prime submodule is a strongly irreducible
submodule in a multiplication module (See, \cite[Theorem 3.1]{Atani}). This is
not true for non multiplication modules. Indeed, a maximal submodule need not
to be a strongly irreducible. For instance, consider $3$-dimensional Euclidean
space $%
\mathbb{R}
^{3}$ and the maximal subspace $N=\{(x,y,0):x,y\in%
\mathbb{R}
\}.\ $Let $K=\{(x,x,x):x\in%
\mathbb{R}
\}$ and its orthogonal complement $K^{\perp}=\{(x,y,z):x+y+z=0\}.\ $Then note
that $K\cap K^{\perp}=\{0\}\subseteq N$, $K\nsubseteq N$ and $K^{\perp
}\nsubseteq N.\ $Thus, $N\ $is a non strongly irreducible maximal submodule of
$%
\mathbb{R}
^{3}.$\ Here, one can naturally ask "In which modules does a maximal submodule
imply strongly irreducible?" In the following, we will give a complete answer
for this question.

Let $R$ be a ring and $I,J$,$K$ are ideals of $R.$ It is well known that if
$I\ $is coprime to $J\ $and $K$, that is $I+J=R=I+K$, then $I\ $is coprime to
$J\cap K,$\ and this property can be generalized to finitely many ideals of
$R.\ $However, the following example shows that this is not true for general
modules over commutative rings.

\begin{example}
Consider $%
\mathbb{Z}
$-module $%
\mathbb{Z}
\oplus%
\mathbb{Z}
=M\ $and the submodules $N=\langle(1,1)\rangle$, $K=\langle(1,0)\rangle$ and
$L=\langle(0,1)\rangle\ $of $M.\ $Then one can easily see that $N+K=M=N+L$
\ and also $N+K\cap L=N\neq M.$\ Thus, $M\ $does not satisfy coprime condition
on its submodules.
\end{example}

\begin{definition}
An $R$-module $M\ $is said to satisfy finite coprime condition if whenever
$N+K_{i}=M$ for some submodules $N,K_{i}\ $of $M$ and $i=1,2,\ldots,n,\ $then
$N+
{\textstyle\bigcap\limits_{i=1}^{n}}
K_{i}=M.$
\end{definition}

\begin{lemma}
\label{lemfinitestar}An $R$-module $M\ $satisfies finite coprime condition if
and only if $N+K_{1}=M=N+K_{2}\ $for some submodules $N,K_{1},K_{2}$ of
$M\ $implies that $N+K_{1}\cap K_{2}=M.$
\end{lemma}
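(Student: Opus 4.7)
The plan is to prove the two implications separately, with the forward direction being essentially definitional and the backward direction proceeding by induction on the number of submodules.

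For the forward direction, I would simply observe that the case $n=2$ is a particular instance of the finite coprime condition. If $M$ satisfies the finite coprime condition and $N+K_1=M=N+K_2$, then by taking $n=2$ in the definition we get $N+K_1\cap K_2=M$ immediately.

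For the backward direction, I would induct on $n$. The base cases $n=1$ (trivial) and $n=2$ (the given hypothesis) are handled. Assuming the conclusion $N+\bigcap_{i=1}^{n}K_{i}=M$ whenever $N+K_{i}=M$ for all $i=1,\ldots,n$, I would then consider $n+1$ submodules $K_{1},\ldots,K_{n+1}$ with $N+K_{i}=M$ for each $i$. Setting $K=\bigcap_{i=1}^{n}K_{i}$, the induction hypothesis yields $N+K=M$. Combined with $N+K_{n+1}=M$, the two-submodule hypothesis gives $N+K\cap K_{n+1}=M$, which is precisely $N+\bigcap_{i=1}^{n+1}K_{i}=M$. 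This closes the induction.

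I do not anticipate any real obstacle; the argument is the standard reduction of a finite coprimality statement to the pairwise case, entirely analogous to the classical ring-theoretic fact mentioned in the paragraph preceding the definition. The only thing worth being careful about is the formal bookkeeping in the induction step, namely, grouping the first $n$ submodules together via intersection and then applying the two-variable hypothesis to the pair $(K,K_{n+1})$ with the same fixed submodule $N$.
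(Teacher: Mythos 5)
Your proof is correct and is precisely the standard induction that the paper leaves implicit (its own proof is just ``Is obvious''): the forward direction is the case $n=2$ of the definition, and the converse follows by grouping $\bigcap_{i=1}^{n}K_{i}$ and applying the two-submodule hypothesis with the same fixed $N$. No issues.
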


\begin{proof}
Is obvious.
\end{proof}

Now, we give the relations between finite coprime conditions and strongly
irreducible submodules with the following our first theorem.

\begin{theorem}
\label{finitestar}Let $M\ $be finitely generated $R$-module. The following
statements are equivalent. \ 

(i)\ $M\ $satisfies finite coprime condition.

(ii)\ If $N+K_{1}=M=N+K_{2}\ $for some submodules $N,K_{1},K_{2}$ of $M,$ then
$N+K_{1}\cap K_{2}=M.$

(iii) Every maximal submodule is strongly irreducible.
\end{theorem}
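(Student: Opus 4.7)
The plan is to verify (i) $\Leftrightarrow$ (ii) essentially for free from the preceding lemma, and to concentrate all real work on the equivalence (ii) $\Leftrightarrow$ (iii), which is where the hypothesis of finite generation enters. The implication (i) $\Rightarrow$ (ii) is the $n=2$ case of the defining property, and (ii) $\Rightarrow$ (i) is exactly the content of Lemma \ref{lemfinitestar} (a straightforward induction on $n$, grouping $K_1 \cap \cdots \cap K_{n-1}$ and $K_n$).

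For (ii) $\Rightarrow$ (iii), I would take a maximal submodule $P$ of $M$ and submodules $K_1, K_2$ with $K_1 \cap K_2 \subseteq P$, and assume for contradiction that neither $K_1$ nor $K_2$ lies in $P$. Maximality of $P$ then forces $P + K_1 = M = P + K_2$, and applying (ii) with $N = P$ yields $P + K_1 \cap K_2 = M$. But $K_1 \cap K_2 \subseteq P$ makes the left side equal to $P$, contradicting $P \neq M$. This step does not use finite generation.

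The reverse implication (iii) $\Rightarrow$ (ii) is where I expect the main (mild) obstacle, because this is the only place where I need to manufacture a maximal submodule out of a proper one. The plan is: assuming $N + K_1 = M = N + K_2$ but $N + K_1 \cap K_2 \neq M$, use that $M$ is finitely generated to find a maximal submodule $P$ of $M$ with $N + K_1 \cap K_2 \subseteq P$ (standard Zorn argument; this is precisely where finite generation is indispensable). Then $N \subseteq P$ and $K_1 \cap K_2 \subseteq P$. Strong irreducibility of $P$ gives $K_1 \subseteq P$ or $K_2 \subseteq P$; in either case $M = N + K_i \subseteq P$, contradicting that $P$ is proper.

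The real subtlety to keep an eye on is the finite generation hypothesis: without it, a proper submodule need not sit inside any maximal submodule, so (iii) could become vacuously weak while (ii) still fails. Apart from that, each step is a short containment argument, and no delicate computation is required.
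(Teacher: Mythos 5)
Your proposal is correct and follows essentially the same route as the paper: (i) $\Leftrightarrow$ (ii) via Lemma \ref{lemfinitestar}, (ii) $\Rightarrow$ (iii) by the maximality argument with $N=P$, and (iii) $\Rightarrow$ (ii) by embedding $N+K_1\cap K_2$ in a maximal submodule (using finite generation) and applying strong irreducibility. No substantive differences.
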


\begin{proof}
$(i)\Leftrightarrow(ii):\ $Follows from Lemma \ref{lemfinitestar}.

$(ii)\Rightarrow(iii):\ $Suppose that $(ii)\ $holds and $P\ $is a maximal
submodule of $M$. To prove that $P\ $is strongly irreducible, assume that
$K\cap L\subseteq P\ $for some submodules $K,L\ $of $M.\ $Let $K\nsubseteq
P\ $and $L\nsubseteq P.\ $Since $P$ is maximal, $P+K=M=P+L$ .\ Then by
$(ii),\ $we have $P+K\cap L=P=M\ $which is a contradiction. Thus, $K\subseteq
P$ or $L\subseteq P\ $which completes the proof.

$(iii)\Rightarrow(ii):\ $Assume that every maximal submodule is strongly
irreducible. Let $N+K_{1}=M=N+K_{2}\ $for some submodules $N,K_{1},K_{2}$ of
$M.\ $Now, we will show that $N+K_{1}\cap K_{2}=M.\ $Suppose that $N+K_{1}\cap
K_{2}\neq M.$ As $M\ $is finitely generated, by \cite{Lu}, there exists a
maximal submodule $P$ of $M\ $containing $N+K_{1}\cap K_{2}.\ $As $K_{1}\cap
K_{2}\subseteq P\ $and $P$ is maximal, by assumption, we have $K_{1}\subseteq
P$ or $K_{2}\subseteq P.\ $Without loss of generality, we may assume that
$K_{1}\subseteq P.\ $This implies that $N+K_{1}=M\subseteq P$ which is absurd.
Thus, we have the desired result $N+K_{1}\cap K_{2}=M.$
\end{proof}

Let $M\ $be an $R$-module. Then we define the family $\mathfrak{B}_{M}$ as the
set of all coprime cosets $\{m+N\}$ where $m\in M$ and $N\ $is a nonzero
submodule of $M\ $such that $N+Rm=M.\ $Let $R\ $be an integral domain. If we
consider $R\ $as an $R$-module, then Clark et al. showed that $\mathfrak{B}%
_{R}$ is the basis for a topology on $R\ $which is denoted by $\widetilde
{G(R)},\ $also the subspace topology of $\widetilde{G(R)}$ on $R^{\bullet
}=R-\{0\}$, denoted by $G(R),\ $is called the Golomb topology of $R$
\cite{CLP18}$.\ $Recall from \cite{Sharp} that an $R$-module $M$ is said to be
a \textit{torsion free module} if whenever $am=0$ for some $a\in R\ $and $m\in
M,\ $then $a=0\ $or $m=0.\ $It is clear that if $M\ $is a torsion free module,
then $R\ $must be integral domain. The class of torsion free modules can be
viewed as an extension of integral domains to the context of modules. So one
can naturally consider whether the family $\mathfrak{B}_{M}$ of coprime cosets
for torsion free $R$-module $M\ $is a basis for a topology on $M$ or not.\ The
following example illustrates that $\mathfrak{B}_{M}$ may not be a basis for
any topology on $M\ $even if $M\ $is a torsion free $R$-module.

\begin{example}
Consider $%
\mathbb{Z}
$-module $%
\mathbb{Z}
\oplus%
\mathbb{Z}
=M$ and the submodules $N_{1}=
\mathbb{Z}
\oplus0,\ N_{2}=0\oplus%
\mathbb{Z}
.\ $Let $m_{1}=(1,1)=m_{2}\in M.\ $Then $m_{1}+N_{1}$ and $m_{2}+N_{2}$ are in
$\mathfrak{B}_{M}.\ $Also, it is clear that $\left(  m_{1}+N_{1}\right)
\cap\left(  m_{2}+N_{2}\right)  =\{(1,1)\},$ and for any nonzero submodule
$K\ $of $M$, we have $(1,1)+K\nsubseteq\left(  m_{1}+N_{1}\right)  \cap\left(
m_{2}+N_{2}\right)  .\ $Thus, $\mathfrak{B}_{M}$ can not be basis for any
topology on $M.$
\end{example}

Now, in the following theorems, we prove that under some conditions on
$M,\ \mathfrak{B}_{M}$ is the basis for a topology on $M$ denoted by
$\widetilde{G(M)}.$ In this case, the subspace topology of $\widetilde{G(M)}$
on $M^{\bullet}=M-\{0\}$ is called Golomb topology of $M\ $and denoted by
$G(M).\ $

\begin{theorem}
\label{pGol}Let $M\ $be a meet irreducible finitely generated $R$-module over
which all maximal submodules are strongly irreducible, then $\mathfrak{B}_{M}$
is the basis for the topology $\widetilde{G(M)}.\ $
\end{theorem}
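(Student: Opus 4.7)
The plan is to verify directly the two defining axioms of a basis: (a) the sets in $\mathfrak{B}_M$ cover $M$, and (b) for any $x\in B_1\cap B_2$ with $B_1,B_2\in\mathfrak{B}_M$ there is some $B_3\in\mathfrak{B}_M$ with $x\in B_3\subseteq B_1\cap B_2$. Axiom (a) is essentially free: the submodule $N=M$ is nonzero, satisfies $N+Rm=M$ for every $m\in M$, and the corresponding basis element $m+M=M$ covers all of $M$.

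The substantive step is (b). Suppose $x\in(m_1+N_1)\cap(m_2+N_2)$ with $m_i+N_i\in\mathfrak{B}_M$. My candidate for $B_3$ is $x+N$, where $N:=N_1\cap N_2$. The inclusion $x+N\subseteq (m_1+N_1)\cap(m_2+N_2)$ is automatic, because $x\in m_i+N_i$ forces $x+N_i=m_i+N_i$, and $N\subseteq N_i$. What remains is to verify that $x+N$ actually lies in $\mathfrak{B}_M$, i.e., that $N$ is a nonzero submodule of $M$ and that $N+Rx=M$.

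The nonvanishing $N\neq 0$ is exactly where meet irreducibility is used: both $N_1$ and $N_2$ are nonzero by the definition of $\mathfrak{B}_M$, and since $M$ is meet irreducible we obtain $N_1\cap N_2\neq 0$. For the coprimality $N+Rx=M$, I would first upgrade the hypotheses $N_i+Rm_i=M$ to $N_i+Rx=M$ for $i=1,2$: writing $x=m_i+n_i$ with $n_i\in N_i$ gives $Rm_i\subseteq Rx+N_i$, so $M=N_i+Rm_i\subseteq N_i+Rx$. Then I would apply Theorem \ref{finitestar}, which is available because $M$ is finitely generated and every maximal submodule of $M$ is strongly irreducible: the finite coprime condition converts $Rx+N_1=M=Rx+N_2$ into $Rx+N_1\cap N_2=M$, i.e., $N+Rx=M$, as desired.

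The only real obstacle is recognizing that the intersection $N_1\cap N_2$ must simultaneously be nonzero and coprime to $Rx$; the theorem's two hypotheses are tailored to supply exactly these two facts. No delicate case analysis or bookkeeping is needed beyond this, so the proof is essentially a clean application of Theorem \ref{finitestar} together with meet irreducibility.
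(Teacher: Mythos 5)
Your proposal is correct and follows essentially the same route as the paper's proof: meet irreducibility gives $N_1\cap N_2\neq 0$, the coset membership upgrades $Rm_i+N_i=M$ to $Rx+N_i=M$, and Theorem \ref{finitestar} supplies the finite coprime condition yielding $Rx+N_1\cap N_2=M$. The only (harmless) addition is your explicit check of the covering axiom, which the paper leaves implicit.
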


\begin{proof}
$(i):\ $Since $M\ $is finitely generated whose all maximal submodules are
strongly irreducible, by Theorem \ref{finitestar}, $M\ $satisfies finite
coprime condition. Let $(m_{1}+N_{1}),(m_{2}+N_{2})\in\mathfrak{B}_{M}$ and
choose $m\in(m_{1}+N_{1})\cap(m_{2}+N_{2}).\ $Then $N_{1},N_{2}\ $are nonzero,
and by meet irreducible condition, we have $N_{1}\cap N_{2}\neq0.$\ Since
$m_{i}+N_{i}\in\mathfrak{B}_{M},\ $we have $Rm_{i}+N_{i}=M$ for each
$i=1,2.$\ Also, since $m-m_{i}\in N_{i},\ $we have $Rm+N_{i}=M$ for each
$i=1,2.\ $By finite coprime condition, we conclude that $Rm+N_{1}\cap
N_{2}=M,\ $that is, $m+N_{1}\cap N_{2}\in\mathfrak{B}_{M}.\ $Also, it is clear
that $m+N_{1}\cap N_{2}\subseteq(m_{1}+N_{1})\cap(m_{2}+N_{2})$. Thus, we have
the desired result.
\end{proof}

\begin{theorem}
\label{pGol2}Let $M\ $be a multiplication\ meet irreducible $R$-module, then
$\mathfrak{B}_{M}$ is the basis for the topology $\widetilde{G(M)}.\ $
\end{theorem}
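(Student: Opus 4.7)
The plan is to mirror, at a structural level, the argument used in Theorem \ref{pGol}, but replace the finite-coprime-condition step (which there relied on Theorem \ref{finitestar} and thus on finite generation) by a direct ideal-theoretic argument using the defining feature of multiplication modules. Concretely, take two basis elements $m_1+N_1,\,m_2+N_2\in\mathfrak{B}_M$ and a point $m\in(m_1+N_1)\cap(m_2+N_2)$. Meet irreducibility of $M$ gives $N_1\cap N_2\neq 0$, and $m-m_i\in N_i$ combined with $Rm_i+N_i=M$ yields $Rm+N_i=M$ for $i=1,2$. Thus everything reduces to proving the key coprime statement $Rm+N_1\cap N_2=M$, for once this is in hand, $m+N_1\cap N_2\in\mathfrak{B}_M$ sits inside $(m_1+N_1)\cap(m_2+N_2)$ and the basis criterion is verified.

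To produce that coprime identity, I would use the standard characterization for multiplication modules, namely that $IM=M$ if and only if $I+\operatorname{ann}(M)=R$ (this is the Nakayama-type lemma of El-Bast--Smith which is a routine consequence of the multiplication hypothesis). Write $Rm=IM$, $N_1=J_1M$, $N_2=J_2M$ for ideals $I,J_1,J_2$ of $R$, which is possible because $M$ is multiplication. The two equalities $Rm+N_i=M$ then translate into $(I+J_i)M=M$, and hence into the ring-level coprimality $I+J_i+\operatorname{ann}(M)=R$ for $i=1,2$.

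Now set $A:=I+\operatorname{ann}(M)$. The two relations become $J_1+A=R=J_2+A$ inside $R$, and the elementary ring fact that coprime ideals have a coprime intersection (via $J_1J_2\subseteq J_1\cap J_2$) gives $J_1\cap J_2+A=R$, that is, $(J_1\cap J_2)+I+\operatorname{ann}(M)=R$. Invoking the multiplication-module characterization a second time, this lifts back to $((J_1\cap J_2)+I)M=M$. Since $(J_1\cap J_2)M\subseteq J_1M\cap J_2M=N_1\cap N_2$, we conclude $Rm+N_1\cap N_2=M$, as required.

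The main obstacle I expect is precisely the one just addressed: without finite generation we cannot embed $Rm+N_1\cap N_2$ in a maximal submodule and replay the strongly-irreducible argument of Theorem \ref{finitestar}, so a direct ring-theoretic route through $\operatorname{ann}(M)$ is unavoidable. Once the characterization $IM=M\Leftrightarrow I+\operatorname{ann}(M)=R$ is brought in, the remaining steps are essentially the same formal bookkeeping as in the proof of Theorem \ref{pGol}.
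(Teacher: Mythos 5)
Your reduction of the problem to the single coprimality statement $Rm+N_1\cap N_2=M$ is correct, and the bookkeeping with $A=I+\operatorname{ann}(M)$ would close the argument \emph{if} the characterization you invoke were true. But the step you lean on --- that for a multiplication module $M$ one has $IM=M$ if and only if $I+\operatorname{ann}(M)=R$ --- is not a theorem of El-Bast--Smith and is false in the direction you need. The implication $I+\operatorname{ann}(M)=R\Rightarrow IM=M$ is trivial for every module; the converse is a Nakayama-type statement and genuinely requires finite generation. Concretely, let $R=\prod_{n\geq 1}\mathbb{F}_2$ and let $M=I=\bigoplus_{n\geq 1}\mathbb{F}_2$, viewed as an ideal of $R$ and hence as an $R$-module. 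Every nonzero element of a submodule $N$ produces the corresponding standard idempotents inside $N$, so every submodule has the form $\bigoplus_{n\in S}\mathbb{F}_2=JM$ for the matching ideal $J$; thus $M$ is a multiplication module, it is faithful, and $IM=I^2=I=M$, yet $I+\operatorname{ann}(M)=I\neq R$. Since Theorem~\ref{pGol2} deliberately drops finite generation (that case is handled, under an extra hypothesis, by Theorem~\ref{pGol}), you cannot import Nakayama here, and your translation of $Rm+N_j=M$ into $I+J_j+\operatorname{ann}(M)=R$ is unjustified. (This $M$ is not meet irreducible, so it does not contradict the theorem itself; but if meet irreducibility is what is supposed to rescue the ring-level coprimality, that would need a separate proof, and your proposal uses meet irreducibility only to obtain $N_1\cap N_2\neq 0$.)

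The paper closes the same gap by a different and valid route: assuming $Rm+N_1\cap N_2\neq M$, it invokes \cite[Theorem 2.5]{Smith}, which guarantees that every proper submodule of a nonzero multiplication module is contained in a maximal submodule $P$; since $(P:M)$ is a prime ideal and $(N_1\cap N_2:M)=(N_1:M)\cap(N_2:M)\subseteq (P:M)$, one gets $(N_i:M)\subseteq(P:M)$ for some $i$, hence $N_i=(N_i:M)M\subseteq (P:M)M=P$ because $M$ is multiplication, contradicting $Rm+N_i=M\subseteq P$. In other words, the multiplication hypothesis is exploited not through an annihilator criterion for $IM=M$, but through the existence of maximal submodules and the primeness of their residuals; that is the substitute for the maximal-submodule argument you correctly identified as unavailable from finite generation, and it is the step your proposal is missing.
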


\begin{proof}
Suppose that $M\ $is a multiplication meet irreducible $R$-module. Let
$(m_{1}+N_{1}),(m_{2}+N_{2})\in\mathfrak{B}_{M}$ and choose $m\in(m_{1}%
+N_{1})\cap(m_{2}+N_{2}).\ $Then $N_{1},N_{2}\ $are nonzero, and by meet
irreducible condition, we have $N_{1}\cap N_{2}\neq0.$\ Since $m_{i}+N_{i}%
\in\mathfrak{B}_{M},\ $we have $Rm_{i}+N_{i}=M$ for each $i=1,2.$\ Also, since
$m-m_{i}\in N_{i},\ $we have $Rm+N_{i}=M$ for each $i=1,2.\ $Now, we will show
that $Rm+(N_{1}\cap N_{2})=M.\ $Suppose that $Rm+(N_{1}\cap N_{2})\neq
M.\ $Then by \cite[Theorem 2.5]{Smith}, there exists a maximal submodule $P$
of $M\ $containing $Rm+(N_{1}\cap N_{2}).\ $As $N_{1}\cap N_{2}\subseteq
P,\ $we have $(N_{1}\cap N_{2}:M)=(N_{1}:M)\cap(N_{2}:M)\subseteq(P:M).\ $As
$(P:M)$ is a prime ideal, we conclude that $(N_{1}:M)\subseteq(P:M)$ or
$(N_{2}:M)\subseteq(P:M).\ $Which implies that $N_{1}\subseteq P$ or
$N_{2}\subseteq P\ $as $M\ $is multiplication. Then we have $Rm+N_{1}%
=M\subseteq P$ or $Rm+N_{2}=M\subseteq P$ which is a contradiction. Thus
$Rm+N_{1}\cap N_{2}=M,\ $that is, $m+N_{1}\cap N_{2}\in\mathfrak{B}_{M}%
.\ $Also, $m+N_{1}\cap N_{2}\subseteq(m_{1}+N_{1})\cap(m_{2}+N_{2})$\ which
completes the proof.
\end{proof}

\section{Properties of the Golomb Topology on Modules}

\begin{theorem}
\label{tind}Let $M\ $be a meet irreducible finitely generated $R$-module whose
all maximal submodules are strongly irreducible or multiplication meet
irreducible $R$-module. The following statements are equivalent.

(i)\ $\widetilde{G(M)}\ $is an indiscrete space.

(ii)\ $G(M)\ $is an indiscrete space.

(iii) $M\ $is a simple module.
\end{theorem}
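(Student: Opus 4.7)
The plan is to prove $(iii)\Rightarrow(i)\Rightarrow(ii)\Rightarrow(iii)$, with all the content in the last step. For $(iii)\Rightarrow(i)$, if $M$ is simple then the only nonzero submodule of $M$ is $M$ itself, so every basic open $m+N\in\mathfrak{B}_{M}$ collapses to $m+M=M$; hence $\mathfrak{B}_{M}=\{M\}$ and the generated topology is $\{\emptyset,M\}$. The implication $(i)\Rightarrow(ii)$ is then automatic: the subspace topology on $M^{\bullet}$ is $\{\emptyset,M^{\bullet}\}$, which is indiscrete.

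For $(ii)\Rightarrow(iii)$ I would argue by contraposition. Assume $M$ is not simple and choose a nonzero proper submodule. The decisive preliminary step is to promote this to a \emph{nonzero} maximal submodule $P$ of $M$; this is where the theorem's two hypothesis branches come in. In the finitely generated branch, the reference \cite{Lu} already used in Theorem~\ref{finitestar} shows that every proper submodule lies in a maximal one; in the multiplication branch, the same conclusion is \cite[Theorem 2.5]{Smith} already used in Theorem~\ref{pGol2}. In either case the maximal submodule $P$ above our nonzero proper one is itself nonzero.

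With $P$ in hand, pick any $m\in M\setminus P$. By maximality $Rm+P=M$, so $m+P\in\mathfrak{B}_{M}$, and since $m\notin P$ we have $0\notin m+P$, whence $(m+P)\cap M^{\bullet}=m+P$ is a basic open set of $G(M)$. To see it is a proper subset of $M^{\bullet}$, note that $P$ and $m+P$ are distinct, hence disjoint, cosets of $P$ in $M$; if $m+P$ equaled $M\setminus\{0\}$, then $P$ would have to lie inside $\{0\}$, contradicting $P\neq0$. Thus $G(M)$ admits a nonempty proper open set and is not indiscrete. The one non-mechanical step in the argument is producing the nonzero maximal submodule, and each disjunct of the hypothesis is exactly tailored to supply it.
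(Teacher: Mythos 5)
Your proposal is correct, and for the decisive implication $(ii)\Rightarrow(iii)$ it takes a genuinely different and noticeably cleaner route than the paper. The paper argues directly: assuming $G(M)$ indiscrete and $M$ not simple, it produces a nonzero maximal submodule $N$, an element $m\notin N$, concludes $m+N=M\setminus\{0\}$, and then splits into two cases according to whether $2m\neq 0$ or $2m=0$; the first case is quick ($2m\in m+N$ forces $m\in N$), but the second requires a page-length analysis showing $M=Rm$, ruling out $Rm=\{0,m\}$, and deriving a contradiction from $N=M\setminus\{m\}$. You argue by contraposition and replace the entire case analysis with one observation: since $m\notin P$, the cosets $P$ and $m+P$ are disjoint, so $m+P=M\setminus\{0\}$ would force $P\subseteq\{0\}$, contradicting $P\neq 0$ (which holds because $P$ lies above a nonzero proper submodule). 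This disjoint-coset argument is characteristic-free and avoids the $2m=0$ pathology that the paper's Case 2 exists to handle. Your treatment of the other implications ($(iii)\Rightarrow(i)$ via $\mathfrak{B}_M=\{M\}$, and $(i)\Rightarrow(ii)$ via subspaces of indiscrete spaces) matches the paper, and your sourcing of the nonzero maximal submodule in each hypothesis branch (via \cite{Lu} in the finitely generated case and \cite[Theorem 2.5]{Smith} in the multiplication case) is exactly what the paper relies on. The only thing the paper's longer argument ``buys'' is that it proceeds in the forward direction from the equality $m+N=M\setminus\{0\}$; your contrapositive extracts the same contradiction with far less work.
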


\begin{proof}
Assume that $M\ $is a meet irreducible finitely generated $R$-module over
which all maximal submodules are strongly irreducible.

$(i)\Rightarrow(ii):\ $Follows from the fact that a subspace of an indiscrete
space is also indiscrete.

$(ii)\Rightarrow(iii):\ $Suppose that $G(M)\ $is an indiscrete space. Now, we
will show that $M\ $is a simple module. Assume that $M\ $is not simple. Since
$M\ $is finitely generated, by \cite{Lu}, there exists a nonzero maximal
submodule $N\ $of $M.\ $Choose $m\in M-N.\ $Then we have $Rm+N=M,\ $that is
$m+N\in\mathfrak{B}_{M}.\ $As $\left(  m+N\right)  \cap M^{\bullet}=m+N$ and
$G(M)\ $is an indiscrete space, $m+N=M-\{0\}.\ $Now, we have two cases.
\textbf{Case 1: }Let $2m\neq0.\ $Since $2m\in M-\{0\}=m+N,\ $we conclude that
$2m-m=m\in N$ which is a contradiction. \textbf{Case 2:} Let $2m=0.\ $Since
$m+N=M-\{0\},\ $for every $0\neq x\in M,\ $there exists $n\in N\ $such that
$x=m+n.\ $Then we have $x+m=n\in N\ $which implies that $\{x+m:0\neq x\in
M\}=M-\{m\}\subseteq N.\ $As $N\ $is proper, we conclude that $M-\{m\}=N$.
This gives $M=N\cup\{m\}$ and so $M=N\cup Rm.$ As $N\neq M$, we conclude that
$Rm=M.\ $If $Rm=\{0,m\},\ $then we get $N=M-\{m\}=\{0\}$ which is absurd. So
we have $M=Rm\neq\{0,m\}.\ $Choose an$\ x\in R$ such that $xm\neq0,m.\ $Since
$M-\{m\}=N,\ $we have $xm\in N\ $which implies that $m-xm=(1-x)m\notin N.\ $On
the other hand, note that $m-xm=m$ since $N=M-\{m\}.$ This implies that $xm=0$
which is a contradiction. Therefore, $M\ $is a simple module.

$(iii)\Rightarrow(i):\ $Suppose that $M\ $is a simple module. Then only
nonzero submodule of $M\ $is $M\ $itself. Then for every $m\in M,\ $the only
neighbourhood of $m$ is $m+M=M$, that is, $\widetilde{G(M)}\ $is an indiscrete space.

The case "$M$ is multiplication meet irreducible $R$-module" is similar.
\end{proof}

Let $(X,\tau)$ be a topological space. A point $x\in X$ is said to be an
indiscrete point if the only open neighborhood of $x$ is $X.\ $It is clear
that $X$ is indiscrete if and only if every point of $X$ is indiscrete. We
know from Theorem \ref{tind} that if $M\ $is a simple module, then
$\widetilde{G(M)}\ $and $G(M)$ are indiscrete. So from now on, we assume that
$M\ $is not a simple module. Now, we investigate the indiscrete points of
$\widetilde{G(M)}$ and $G(M).$ Let $M\ $be an $R$-module and $N\ $be a
submodule of $M.\ $The Jacobson radical $J(N)\ $of $N\ $is defined as the
intersection of all maximal submodules of $M\ $containing $N.\ $In particular,
$J(M)=J((0)),\ $that is, intersection of all maximal submodules of $M.\ $If
$J(M)=(0),\ $then $M\ $is said to be a \textit{Jacobson semisimple}.

\begin{proposition}
\label{pind}Let $M\ $be finitely generated meet irreducible $R$-module
satisfying finite coprime condition or multiplication meet irreducible
$R$-module. The indiscrete points of \ $\widetilde{G(M)}$ (respectively,
$G(M))$ are the elements of $J(M)$ (respectively, $J(M)^{\bullet}).$
\end{proposition}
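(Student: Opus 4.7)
The plan is to prove the stronger claim that a point $x\in M$ is indiscrete in $\widetilde{G(M)}$ if and only if $x\in J(M)$, and then transfer the result to $G(M)$ via the subspace construction. Throughout, I will exploit the fact that under either hypothesis on $M$, every proper submodule of $M$ lies inside some maximal submodule: this comes from Zorn's lemma in the finitely generated case (\cite{Lu}) and from \cite[Theorem 2.5]{Smith} in the multiplication case.

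For the forward inclusion, suppose $x\in J(M)$ and let $m+N\in\mathfrak{B}_M$ be any basic open neighborhood of $x$. Writing $x=m+n_0$ with $n_0\in N$ gives $Rx+N=Rm+N=M$. If $N$ were a proper submodule, it would be contained in some maximal submodule $P$, and then $x\in J(M)\subseteq P$ would force $Rx+N\subseteq P\subsetneq M$, contradicting $Rx+N=M$. Hence $N=M$, so $m+N=M$, and $x$ is indiscrete.

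For the reverse inclusion, take $x\in M\setminus J(M)$ and choose a maximal submodule $P$ with $x\notin P$. The standing assumption that $M$ is not simple forces $P\neq 0$, since $\{0\}$ is a maximal submodule only in simple modules. Maximality of $P$ together with $x\notin P$ yields $Rx+P=M$, so $x+P\in\mathfrak{B}_M$ is an open neighborhood of $x$; being a proper coset of $P$, it does not equal $M$, so $x$ is not indiscrete.

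Transferring to $G(M)$ is immediate for $x\in J(M)^{\bullet}$: every open neighborhood of $x$ in $\widetilde{G(M)}$ is $M$, so every open neighborhood of $x$ in the subspace is $M\cap M^{\bullet}=M^{\bullet}$. The main (small) obstacle I foresee is the converse: for $x\in M^{\bullet}\setminus J(M)$, I must check that the basic neighborhood $x+P$ produced above restricts to a \emph{proper} open neighborhood of $x$ in $M^{\bullet}$, i.e.\ $(x+P)\cap M^{\bullet}\neq M^{\bullet}$. This means excluding the configuration $M=(x+P)\cup\{0\}$; the key observation is that $P\neq 0$, so any nonzero $p\in P$ lies in $M^{\bullet}$ but cannot lie in $x+P$ (else $x\in P$), which supplies an element of $M^{\bullet}$ outside $x+P$ and completes the proof.
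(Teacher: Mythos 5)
Your proof is correct and follows essentially the same route as the paper's: membership in $J(M)$ forces every basic coprime coset $m+N$ containing $x$ to have $N=M$ (via a maximal submodule over a proper $N$), and conversely a maximal $P$ avoiding $x$ yields the proper neighborhood $x+P$. The one point where you diverge is in verifying $(x+P)\cap M^{\bullet}\neq M^{\bullet}$: the paper defers to "a similar argument in the proof of Theorem \ref{tind}" (a two-case analysis), whereas your direct observation that any nonzero $p\in P$ lies in $M^{\bullet}$ but not in $x+P$ is cleaner and fully self-contained.
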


\begin{proof}
Without loss of generality, we may assume that $M$ is finitely generated meet
irreducible $R$-module satisfying finite coprime condition (The case "$M\ $is
multiplication meet irreducible $R$-module" is similar).

First, we will show that the indiscrete points of $\widetilde{G(M)}\ $is
$J(M).\ $Let $m\in J(M).\ $Choose a proper submodule $N$ of $M.\ $Since
$M\ $is finitely generated, there exists a maximal submodule $P\ $of
$M\ $containing $N.\ $Since $m\in J(M),\ $we have $m\in P.\ $This implies that
$Rm+N\subseteq P\neq M.$\ As $M\ $is not a simple module, we have $P\neq
0.\ $Thus, the only open neighborhood of $m$ is $m+M=M.\ $For the converse
assume that $m$ is an indiscrete point of $\widetilde{G(M)}.\ $Let $P$ be a
maximal submodule of $M.\ $Since $M\ $is not simple, we have $P\neq0.\ $If
$m\notin P,\ $then $Rm+P=M$ which implies that $m+P\ $is a neighborhood of
$m.\ $Note that $m+P\ $is nonempty and different from $M\ $which is a
contradiction. Hence, we have $m\in P,\ $that is, $m\in J(M).\ $Now, let $m\in
J(M)^{\bullet}.\ $This implies that $m$ is indiscrete point of $\widetilde
{G(M)},$ and thus it is an indiscrete point in $G(M).$ Now, assume that $m\in
M^{\bullet}$ is an indiscrete point of $G(M).$\ Let $P$ be a maximal submodule
of $M.\ $Assume that $m\notin P.\ $Then $m+P\ $is a neighborhood of $m$ in
$G(M)\ $since $0\notin m+P.\ $It is clear that $m+P$ is nonempty. A similar
argument in the proof of Theorem \ref{tind} shows that $m+P\neq M^{\bullet}$
and this is a contradiction. Thus, we have $m\in P,\ $namely, $m\in
J(M)^{\bullet}.$
\end{proof}

A topological space $X\ $is called a $T_{1}$-space or Fr\'{e}chet if for every
distinct points $x,y$ in $X,\ $there exist two open sets $O_{1},O_{2}\ $such
that $x\in O_{1},y\notin O_{1}$ and $y\in O_{2},x\notin O_{2}.\ $It is clear
that a topological space $X\ $is $T_{1}\ $if and only if $\overline
{\{x\}}=\{x\}$ for every point $x\in X.\ $

\begin{corollary}
Let $M\ $be finitely generated meet irreducible $R$-module satisfying finite
coprime condition or multiplication meet irreducible $R$-module. Then
$\widetilde{G(M)}$ is not $T_{1}$-space.
\end{corollary}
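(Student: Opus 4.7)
The plan is to split into two cases depending on whether $M$ is simple. If $M$ is simple, then Theorem \ref{tind} tells us that $\widetilde{G(M)}$ is an indiscrete topology. Since $M$ is a nonzero module, it contains at least two distinct elements (namely $0$ and any nonzero $m\in M$), and an indiscrete space with more than one point cannot be $T_1$: the only open set containing either point is $M$ itself, which fails to separate them.

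Assume now that $M$ is not simple, so Proposition \ref{pind} applies. It identifies the set of indiscrete points of $\widetilde{G(M)}$ with $J(M)$, the intersection of all maximal submodules of $M$. The key observation is that $0\in J(M)$ trivially, since every submodule (in particular every maximal submodule) contains $0$. Hence $0$ is an indiscrete point of $\widetilde{G(M)}$, meaning the only open neighborhood of $0$ is the whole space $M$.

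To conclude that $\widetilde{G(M)}$ is not $T_1$, pick any nonzero element $y\in M$ (which exists because $M\neq 0$). For $T_1$ one would need an open set containing $0$ but not $y$; but the only open set containing $0$ is $M$, and $y\in M$. Equivalently, $\overline{\{y\}}$ contains $0$, so singletons are not closed. Therefore $\widetilde{G(M)}$ fails to be $T_1$ in this case as well, completing the proof.

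There is essentially no obstacle here: the corollary is a direct consequence of Proposition \ref{pind} together with the trivial remark $0\in J(M)$. The only mild subtlety is that Proposition \ref{pind} was stated under the standing assumption (made just before it) that $M$ is not simple, which forces us to dispatch the simple case separately via Theorem \ref{tind}.
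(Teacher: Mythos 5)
Your proof is correct and follows essentially the same route as the paper: both rest on the observation that $0\in J(M)$, hence $0$ is an indiscrete point by Proposition \ref{pind}, so the closure of any nonzero singleton contains $0$ and $\widetilde{G(M)}$ cannot be $T_{1}$. The only difference is that you dispatch the simple case explicitly via Theorem \ref{tind}, whereas the paper relies on its standing assumption (made just before Proposition \ref{pind}) that $M$ is not simple.
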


\begin{proof}
Without loss of generality, we may assume that $M$ is finitely generated meet
irreducible $R$-module satisfying finite coprime condition (The case "$M\ $is
multiplication meet irreducible $R$-module" is similar).

Since $M\neq0,\ $choose $0\neq m\in M.\ $As $0\in J(M),\ 0$ is indiscrete
point of $\widetilde{G(M)}.\ $This implies that $0\in\overline{\{m\}}$ and so
$\overline{\{m\}}\neq\{m\}.\ $Thus $M\ $is not a $T_{1}$-space.
\end{proof}

\begin{example}
Consider $\mathbb{Z} $-module $M= \mathbb{Z}_{8}$. Then $M$ is finitely
generated meet irreducible module satisfying finite coprime condition and
multiplication meet irreducible module. Then one can easily obtain
\begin{align*}
\mathfrak{B}_{M}  &  =\{ \mathbb{Z}_{8}, \overline{1}+\overline{2}\mathbb{Z}
_{8}, \overline{1}+\overline{4} \mathbb{Z}_{8}, \overline{3}+\overline{4}
\mathbb{Z}_{8}\}\\
&  =\{ \mathbb{Z}_{8}, \{\overline{1}, \overline{3}, \overline{5},
\overline{7}\}, \{\overline{1}, \overline{5}\}, \{\overline{3}, \overline
{7}\}\}
\end{align*}
and
\[
\widetilde{G(M)}=\{\emptyset, \mathbb{Z}_{8},\{\overline{1},\overline
{3},\overline{5},\overline{7}\},\{\overline{1},\overline{5}\},\{\overline
{3},\overline{7}\}\},
\]
and also
\[
G(M)=\{\emptyset,\mathbb{Z}_{8}-\{\overline{0}\},\{\overline{1},\overline
{3},\overline{5},\overline{7}\},\{\overline{1},\overline{5}\},\{\overline
{3},\overline{7}\}\}.
\]
It is clear that $\overline{\{\overline{2}\}}=\{\overline{0},\overline{2}
,\overline{4},\overline{6}\}\neq\{\overline{2}\}$ and hence $\widetilde{G(M)}$
is not a $T_{1}$-space.
\end{example}

\begin{remark}
An Abelian group $A$ is called a topological group if a topology $\tau_{A}$ is
defined on the set $A$ and the group operations
\begin{align*}
\mu:  &  A\times A\rightarrow A\ \ \ and\ \ \ \nu:A\rightarrow A\\
&  (a,b)\rightarrow a+b\ \ \ \ \ \ \ \ \ \ \ \ a\rightarrow-a
\end{align*}
are continuous. A ring $R$ is called a topological ring if a topology
$\tau_{R}$ is defined on the set $R$ and the additive group of the ring $R$ is
a topological group with this topology and the multiplication
\begin{align*}
\eta:  &  R\times R\rightarrow R\\
&  (a,b)\rightarrow a.b
\end{align*}
is also continuous. Similarly, an $R$-module $M$ is called a topological
$R$-module if there is a specified topology on $M$ such that $M$ is a
topological Abelian group and the scalar multiplication
\begin{align*}
\sigma:  &  R\times M\rightarrow M\\
&  (r,m)\rightarrow r.m
\end{align*}
is also continuous. In the previous example, $M=\mathbb{Z}_{8}$ and the
topology on it is $\widetilde{G(M)}=\{\emptyset,\mathbb{Z}_{8},\{\overline
{1},\overline{3},\overline{5},\overline{7}\},\{\overline{1},\overline
{5}\},\{\overline{3},\overline{7}\}\}$. $M=\mathbb{Z}_{8}$ with the topology
$\widetilde{G(M)}$ is not a topological module since the preimage
\begin{align*}
\mu^{-1}(\{\overline{1},\overline{5}\})=  &  \{(\overline{0},\overline
{1}),(\overline{1},\overline{0}),(\overline{2},\overline{7}),(\overline
{3},\overline{6}),(\overline{4},\overline{5}),(\overline{6},\overline
{3}),(\overline{7},\overline{2}),(\overline{0},\overline{5}),\\
&  (\overline{1},\overline{4}),(\overline{2},\overline{3}),(\overline
{3},\overline{2}),(\overline{4},\overline{1}),(\overline{5},\overline
{0}),(\overline{6},\overline{7}),(\overline{7},\overline{6})\}
\end{align*}
of $\{\overline{1},\overline{5}\}\in\widetilde{G(M)}$ is not an element of the
product topology $\widetilde{G(M)}\times\widetilde{G(M)}$ on $M\times M$,
where the basis for the product topology is $\beta_{M\times M}=\{\mathbb{Z}%
_{8}\times\mathbb{Z}_{8},\mathbb{Z}_{8}\times\{\overline{1},\overline
{5}\},\mathbb{Z}_{8}\times\{\overline{3},\overline{7}\},\{\overline
{1},\overline{5}\}\times\mathbb{Z}_{8},\{\overline{3},\overline{7}%
\}\times\mathbb{Z}_{8}\}$.
\end{remark}

Let $K,N\ $be two proper submodules of $M$. In this case, it is clear that
$(N:M)+(K:M)\subseteq(N+K:M)$ and the equality does not hold in general. For
instance, consider $%
\mathbb{Z}
$-module $M=%
\mathbb{Z}
\oplus%
\mathbb{Z}
\ $and the submodules $N=0\oplus%
\mathbb{Z}
$ and $K=%
\mathbb{Z}
\oplus0$ of $M.\ $Then note that $(N:M)=(K:M)=(0)\ $and also $(N+K:M)=%
\mathbb{Z}
$.\ Recall from \cite{Smith2} that an $R$-module $M\ $is said to be a $\mu
$-module if $(N+K:M)=(N:M)+(K:M)$ for every submodules $N,K\ $of $M.\ $Every
finitely generated multiplication module is an example of $\mu$-module. The
authors in \cite[Theorem 5]{UlTeKo} proved that the Chinese Remainder Theorem
is valid for $\mu$-modules. For the sake of completeness, we will give the
Chinese Remainder Theorem for $\mu$-modules in the following.

\begin{theorem}
\label{tchi}\textbf{(Chinese Remainder Theorem) }Let $M\ $be a $\mu$-module
and $K,N\ $be two submodules of $M\ $such that $K+N=M.\ $Consider
$R$-homomorphism $\pi:M\rightarrow M/K\oplus M/N$ defined by $\pi
(m)=(m+K,m+N)$ for each $m\in M.\ $Then $\pi$ is surjective and $Ker\pi=K\cap
N.\ $In this case, $M/K\cap N\cong M/K\oplus M/N.$
\end{theorem}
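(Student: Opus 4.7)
The plan is to verify the three claims in order --- $\pi$ is a well-defined $R$-homomorphism, $\ker\pi = K\cap N$, and $\pi$ is surjective --- and then invoke the First Isomorphism Theorem to conclude $M/(K\cap N)\cong M/K\oplus M/N$. The first two items are routine: well-definedness follows from $\pi$ being built out of canonical projections $M\to M/K$ and $M\to M/N$, and for the kernel one just observes that $\pi(m)=(0,0)$ forces $m\in K$ and $m\in N$ simultaneously, giving $\ker\pi = K\cap N$.

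The content of the theorem lies in surjectivity, and this is where the $\mu$-module hypothesis does the work. I would proceed as follows. Since $K+N=M$, applying the residual on both sides and using the $\mu$-module property gives
\[
R \;=\; (M:M) \;=\; (K+N:M) \;=\; (K:M)+(N:M).
\]
Hence there exist $r\in(K:M)$ and $s\in(N:M)$ with $r+s=1$. By definition of the residual, $rM\subseteq K$ and $sM\subseteq N$. Now given an arbitrary target $(a+K,\,b+N)\in M/K\oplus M/N$, I would exhibit the preimage explicitly by setting $m := sa+rb \in M$ and checking
\[
m-a \;=\; sa+rb-a \;=\; -ra+rb \;=\; r(b-a)\in rM\subseteq K,
\]
\[
m-b \;=\; sa+rb-b \;=\; sa-sb \;=\; s(a-b)\in sM\subseteq N,
\]
so that $\pi(m)=(a+K,b+N)$ and $\pi$ is surjective.

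With surjectivity in hand, the First Isomorphism Theorem applied to $\pi$ yields $M/\ker\pi \cong \operatorname{Im}\pi$, i.e.\ $M/(K\cap N)\cong M/K\oplus M/N$, which is the final assertion.

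The main obstacle --- and the only nontrivial step --- is producing the pair $r,s$ with $r+s=1$, $rM\subseteq K$, $sM\subseteq N$. In arbitrary modules one only has the inclusion $(K:M)+(N:M)\subseteq(K+N:M)=R$, which need not be an equality, so the classical ring-theoretic trick $1=k+n$ fails. The $\mu$-module hypothesis is precisely what restores the reverse inclusion and enables the explicit lift $m=sa+rb$; once that element is in hand, the two coset computations above are symmetric and direct.
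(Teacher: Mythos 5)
Your proof is correct and follows essentially the same route as the paper's: both use the $\mu$-module hypothesis to get $R=(K+N:M)=(K:M)+(N:M)$, write $1=r+s$ with $r\in(K:M)$, $s\in(N:M)$, and exhibit the explicit preimage $sa+rb$ of $(a+K,b+N)$ before applying the First Isomorphism Theorem. No gaps.
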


\begin{proof}
Let $K+N=M.\ $As $M\ $is a $\mu$-module, we have
$(M:M)=R=(K+N:M)=(K:M)+(N:M).$ Then we can write $1=a+b$ for some $a\in(K:M)$
and $b\in(N:M).\ $Now, let $(x+K,y+N)\in M/K\oplus M/N.\ $Put $z=bx+ay\in
M.\ $In this case, we have $z+K=bx+ay+K=bx+K=bx+ax+K=x+K.\ $Likewise, we have
$z+N=y+N.\ $Thus, we obtain $\pi(z)=(z+K,z+N)=(x+K,y+N),\ $that is, $\pi$ is
surjective. Also, it is clear that $Ker\pi=K\cap N\ $and the rest follows from
the First Isomorphism Theorem.
\end{proof}

\begin{corollary}
\label{cchi}Let $M\ $be a $\mu$-module and $K,N\ $be two submodules of
$M\ $such that $K+N=M.$\ Then for every $x,y\in M,\ $we have $(x+N)\cap
(y+K)\neq\emptyset.$
\end{corollary}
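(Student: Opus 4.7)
The plan is to deduce this immediately from Theorem \ref{tchi}. Given $x, y \in M$, I want to produce a single element $z \in M$ that lies simultaneously in $y + K$ and in $x + N$; translating this through the map $\pi$ of the Chinese Remainder Theorem, such a $z$ is precisely a preimage of the pair $(y+K,\, x+N) \in M/K \oplus M/N$.

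Concretely, I would argue as follows. Since $K + N = M$ and $M$ is a $\mu$-module, Theorem \ref{tchi} tells us that the homomorphism $\pi: M \to M/K \oplus M/N$ defined by $\pi(m) = (m+K,\, m+N)$ is surjective. Apply surjectivity to the element $(y+K,\, x+N)$ to obtain some $z \in M$ with $\pi(z) = (y+K,\, x+N)$. This gives $z + K = y + K$ and $z + N = x + N$, so $z \in y + K$ and $z \in x + N$, whence $z \in (x+N) \cap (y+K)$, and the intersection is nonempty.

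A shortcut sanity check that avoids invoking the full CRT is also available: since $K + N = M$, we may write $y - x = k + n$ with $k \in K$ and $n \in N$, and then the element $x + n = y - k$ simultaneously belongs to $x + N$ and $y + K$. However, since the corollary is placed immediately after Theorem \ref{tchi}, the cleaner presentation is the CRT-based one. There is essentially no obstacle here — the only thing worth noting is that one must be careful to feed the pair in the correct order, namely $(y+K,\, x+N)$ and not $(x+K,\, y+N)$, so that the resulting $z$ lands in the intended cosets.
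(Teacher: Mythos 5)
Your proposal is correct and follows exactly the paper's own argument: apply the surjectivity of $\pi$ from Theorem \ref{tchi} to the pair $(y+K,\,x+N)$ and read off that the preimage lies in both cosets. The elementary shortcut you mention (decomposing $y-x=k+n$) is a valid alternative, but your main line is the one the paper uses.
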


\begin{proof}
Let $x,y\in M$ and $K+N=M,\ $where $M\ $is a $\mu$-module and $K,N\ $are
submodules of $M.\ $Then by Theorem \ref{tchi}, $\pi:M\rightarrow M/K\oplus
M/N$ is surjective homomorphism. In this case, $\pi(m)=(m+K,m+N)=(y+K,x+N)\ $
for some $m\in M.\ $Thus we have $m\in(x+N)\cap(y+K).\ $
\end{proof}

\begin{theorem}
\label{tclo}Let $M\ $be finitely generated meet irreducible $\mu$-module
satisfying finite coprime condition or multiplication meet irreducible $\mu
$-module.\ Suppose that $N$ is a submodule of $M\ $and $m\in M.\ $The
following statements are satisfied.

(i)\ $J(N)\subseteq\overline{m+N}$ in $\widetilde{G(M)}.$

(ii)\ $\overline{N}=J(N)$ in $\widetilde{G(M)}.$

(iii)\ Suppose that $m\notin N.\ $Then we have $J(N)^{\bullet}\subseteq
\overline{(m+N)^{\bullet}}.$
\end{theorem}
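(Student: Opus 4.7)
The plan is to prove the three parts in order, with (i) doing almost all of the real work and (ii), (iii) following easily. Throughout, I will use that in either of the two hypothesis classes, every proper submodule of $M$ is contained in a maximal submodule (for the finitely generated case this is \cite{Lu}; for the multiplication case it is \cite[Theorem 2.5]{Smith}), and that $M$ is a $\mu$-module so that the Chinese Remainder type statement in Corollary \ref{cchi} is available. I will also tacitly assume $M$ is not simple, so that every maximal submodule is nonzero and hence eligible as the ``radical submodule'' of a basic open set.

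For (i), I would fix $x \in J(N)$ and an arbitrary basic neighborhood $x + K$ of $x$, so $K$ is a nonzero submodule with $Rx + K = M$. The key intermediate claim is $K + N = M$. Suppose not; then $K + N$ sits inside some maximal submodule $P$ of $M$. Since $N \subseteq P$ and $x \in J(N)$, one has $x \in P$, so $Rx + K \subseteq P \neq M$, contradicting $Rx + K = M$. Hence $K + N = M$, and Corollary \ref{cchi} (applied to the submodules $K, N$ and points $x, m$) yields $(x + K) \cap (m + N) \neq \emptyset$. As $x + K$ was an arbitrary basic neighborhood of $x$, this gives $x \in \overline{m + N}$.

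For (ii), the inclusion $J(N) \subseteq \overline{N}$ is just (i) with $m = 0$, since $0 + N = N$. For the reverse inclusion $\overline{N} \subseteq J(N)$, I argue by contrapositive: if $x \notin J(N)$, pick a maximal submodule $P$ with $N \subseteq P$ and $x \notin P$; by maximality $Rx + P = M$, and $P \neq 0$ because $M$ is not simple, so $x + P$ is a basic open neighborhood of $x$. Any $y \in (x + P) \cap N$ would force $x = y - p \in N + P = P$, a contradiction, so $(x + P) \cap N = \emptyset$ and $x \notin \overline{N}$.

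For (iii), the hypothesis $m \notin N$ is used exactly to ensure $0 \notin m + N$, so $m + N = (m + N)^{\bullet}$ already lies entirely in $M^{\bullet}$. Given $x \in J(N)^{\bullet}$ and a basic neighborhood $(x + K) \cap M^{\bullet}$ of $x$ in $G(M)$, part (i) furnishes a point in $(x + K) \cap (m + N)$, which automatically lies in $M^{\bullet}$, hence in $(x + K) \cap M^{\bullet} \cap (m + N)^{\bullet}$. The main obstacle in the whole argument is the $K + N = M$ reduction inside (i): it is the only place that genuinely combines the coprimeness $Rx + K = M$ with the Jacobson condition $x \in J(N)$, and it is precisely what lets Corollary \ref{cchi} take over. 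Everything else is soft pushing.
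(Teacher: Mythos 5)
Your proof is correct and takes essentially the same route as the paper's: the identical reduction to $K+N=M$ via a maximal submodule containing $K+N$ combined with $x\in J(N)$, followed by an appeal to Corollary \ref{cchi}, with the same contrapositive argument for $\overline{N}\subseteq J(N)$ and the same observation that $m\notin N$ forces $m+N\subseteq M^{\bullet}$ in (iii).
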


\begin{proof}
Without loss of generality, we may assume that $M\ $is a finitely generated
meet irreducible $\mu$-module satisfying finite coprime condition (The case
"$M\ $is multiplication meet irreducible $\mu$-module" is similar).

$(i):\ $Let $x\in J(N)\ $and choose a neighborhood $O\ $of $x.\ $Then there
exists a coprime coset $z+P\ $such that $x\in z+P\subseteq O.\ $In this case,
we have $x+P=z+P\subseteq O$.$\ $Now, we will show that $N+P=M.\ $Assume that
$N+P\neq M.\ $As $M\ $is finitely generated, there exists a maximal submodule
$Q\ $of $M$ such that $N+P\subseteq Q.\ $Since $x\in J(N)\ $and $N\subseteq
Q,\ $we have $x\in Q$ which implies that $Rx+P=M\subseteq Q$. This is a
contradiction. Thus, we have $N+P=M.$\ Since $M\ $is a $\mu$-module, by
Corollary \ref{cchi}, we have $x+P\ $and $m+N$ intersect. Then we have $x\in$
$\overline{m+N}.$

$(ii):\ $The inclusion $J(N)\subseteq\overline{N}$ follows from (i). For the
converse, we note that the condition "$\mu$-module" is not necessary. In fact,
choose $m\in\overline{N}$\ and a maximal submodule $P$ of $M\ $containing
$N.\ $Assume that $m\notin P.\ $As $P$ is maximal, $m+P\ $is a coprime coset
containing $m.\ $On the other hand, note that $(m+P)\cap P=\emptyset$ which
implies that $(m+P)\cap N=\emptyset.\ $This gives a contradiction. Thus we
have $m\in P,\ $that is, $m\in J(N).$

$(iii):\ $Since $m\notin N,\ $we have $0\notin m+N$, that is, $m+N\subseteq
M^{\bullet}.\ $The rest follows from (i).
\end{proof}

\begin{corollary}
\label{czeroclosure}Let $M$ be a finitely generated meet irreducible
$R$-module satisfying finite coprime condition or multiplication meet
irreducible $R$-module. Then we have $\overline{\{0\}}=J(M)$ in $\widetilde
{G(M)}.$
\end{corollary}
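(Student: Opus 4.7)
The plan is to split the claimed equality $\overline{\{0\}} = J(M)$ into its two inclusions and to observe that, since $N = \{0\}$ is a rather trivial submodule, neither direction actually needs the $\mu$-module hypothesis that Theorem \ref{tclo} carries around. In particular, the Chinese Remainder Theorem, which was the reason for insisting on $\mu$-modules in Theorem \ref{tclo}(i), will be sidestepped entirely by invoking Proposition \ref{pind} instead.

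For the inclusion $J(M)\subseteq\overline{\{0\}}$, I would simply apply Proposition \ref{pind}: its hypotheses match those of the corollary, and it identifies the indiscrete points of $\widetilde{G(M)}$ with the elements of $J(M)$. So if $m\in J(M)$, then the only open neighbourhood of $m$ in $\widetilde{G(M)}$ is $M$ itself, and $M$ obviously meets $\{0\}$, giving $m\in\overline{\{0\}}$. (If $M$ is simple then $J(M)=0$ and there is nothing to prove, since $0\in\overline{\{0\}}$ trivially; so we may tacitly assume $M$ is not simple when we quote Proposition \ref{pind}.)

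For the reverse inclusion $\overline{\{0\}}\subseteq J(M)$, I would argue by contrapositive, mimicking the ``converse'' half of Theorem \ref{tclo}(ii) (the author has already flagged that this half is independent of the $\mu$-module assumption). Suppose $m\notin J(M)$; then some maximal submodule $P$ of $M$ fails to contain $m$. Maximality gives $Rm+P=M$, so $m+P\in\mathfrak{B}_{M}$ is a basic open neighbourhood of $m$ in $\widetilde{G(M)}$. Because $m\notin P$, we have $-m\notin P$, hence $0\notin m+P$; so $m+P$ is disjoint from $\{0\}$, witnessing $m\notin\overline{\{0\}}$.

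The only mildly delicate point is making sure the hypotheses of Proposition \ref{pind} really are available here. Proposition \ref{pind} is stated exactly under the same two alternatives as the corollary, so this is automatic; the main obstacle is only bookkeeping, namely confirming that the $\mu$-module hypothesis of Theorem \ref{tclo} is genuinely unnecessary in both directions for the special case $N=\{0\}$. Once this is noted, the proof is essentially two short paragraphs as above.
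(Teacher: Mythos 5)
Your proof is correct, but it is organized differently from the paper's, and in a way that is arguably more honest about the hypotheses. The paper disposes of this corollary in one line: ``Follows from Theorem \ref{tclo}(ii) and the fact that $J(M)=J((0))$.'' Read literally, that citation has a mismatch: Theorem \ref{tclo} is stated for $\mu$-modules, a hypothesis the corollary does not carry, and the inclusion $J(N)\subseteq\overline{N}$ in Theorem \ref{tclo}(ii) is obtained from part (i), whose proof goes through the Chinese Remainder Theorem (Corollary \ref{cchi}) and hence genuinely uses the $\mu$-module assumption for general $N$. You noticed this and routed the inclusion $J(M)\subseteq\overline{\{0\}}$ through Proposition \ref{pind} instead (elements of $J(M)$ are indiscrete points, and the whole space meets $\{0\}$), which needs no $\mu$-hypothesis; your reverse inclusion is exactly the ``converse'' argument of Theorem \ref{tclo}(ii), which the paper itself flags as independent of the $\mu$-condition. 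So your version buys a complete justification of the corollary as actually stated, at the cost of redoing two short arguments rather than quoting one. Two small remarks: your appeal to Proposition \ref{pind} and your use of $m+P\in\mathfrak{B}_{M}$ (which requires $P\neq 0$) both rely on the paper's standing convention, announced just before Proposition \ref{pind}, that $M$ is not simple; your parenthetical about the simple case covers only the forward inclusion, and indeed for simple $M$ the statement itself would fail since $\widetilde{G(M)}$ is then indiscrete while $J(M)=0$. Under the standing non-simplicity assumption, everything you wrote is sound.
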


\begin{proof}
Follows from Theorem \ref{tclo} $(ii)$ and the fact that $J(M)=J((0))$.
\end{proof}

\begin{theorem}
Let $M$ be a multiplication $R$-module and $ann(M)$ is a prime ideal of
$R.\ $The following statements are equivalent.

(i) $M\ $is Jacobson semisimple, that is, $J(M)=0.$

(ii)\ $G(M)\ $is a $T_{2}$-space.

(iii)\ $G(M)\ $is a $T_{1}$-space.

(iv) $G(M)\ $is a $T_{0}$-space.

(v) $\widetilde{G(M)}\ $is a $T_{0}$-space.

(vi) In $\widetilde{G(M)},\ $we have $\overline{\{0\}}=\{0\}.$
\end{theorem}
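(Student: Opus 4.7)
The plan is first to verify that the hypotheses of Theorem~\ref{pGol2} are met: combining the multiplication property with primality of $\operatorname{ann}(M)$ forces $M$ to be meet-irreducible, since if $N_{1}\cap N_{2}=0$ then $(N_{1}:M)(N_{2}:M)M\subseteq N_{1}\cap N_{2}=0$, hence $(N_{1}:M)(N_{2}:M)\subseteq\operatorname{ann}(M)$, and primality pushes some $(N_{i}:M)\subseteq\operatorname{ann}(M)$, i.e.\ $N_{i}=0$. So $\widetilde{G(M)}$ exists and Proposition~\ref{pind} and Corollary~\ref{czeroclosure} are at my disposal, and I may use the paper's standing assumption that $M$ is not simple. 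Passing to the domain $\bar{R}:=R/\operatorname{ann}(M)$, the module $M$ becomes a faithful multiplication $\bar{R}$-module; by the cancellation property of such modules, for every $z\in M\smallsetminus\{0\}$ the ideal $I_{z}:=(\bar{R}z:M)$ is nonzero in $\bar{R}$ and satisfies $z\in\mathfrak{a}M\Leftrightarrow I_{z}\subseteq\mathfrak{a}$ for any ideal $\mathfrak{a}\subseteq\bar{R}$, while maximal submodules of $M$ correspond bijectively to maximal ideals of $\bar{R}$ via $P=\mathfrak{m}M$.

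The formal implications come for free. (i)$\Leftrightarrow$(vi) is exactly Corollary~\ref{czeroclosure}. The chain (ii)$\Rightarrow$(iii)$\Rightarrow$(iv) is just $T_{2}\Rightarrow T_{1}\Rightarrow T_{0}$, and (v)$\Rightarrow$(iv) holds because $G(M)$ carries the subspace topology from $\widetilde{G(M)}$. To close the circle I will prove (iv)$\Rightarrow$(i), then (i)$\Rightarrow$(v), and finally the substantive step (i)$\Rightarrow$(ii). For (iv)$\Rightarrow$(i): if $J(M)\neq 0$, pick $m\in J(M)\smallsetminus\{0\}$, which by Proposition~\ref{pind} is an indiscrete point of $G(M)$; since $M$ is not simple $|M^{\bullet}|\geq 2$, so I can choose $y\in M^{\bullet}$ with $y\neq m$, and then $m$ sits in every nonempty open set while the only open set containing $m$ is $M^{\bullet}$, so no open set $T_{0}$-separates $m$ and $y$. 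For (i)$\Rightarrow$(v): for distinct $x,y\in M$, if both lie in $M^{\bullet}$ the claim reduces to (ii); if $x=0$ and $y\neq 0$ then $y\notin J(M)=0$ yields a maximal submodule $P$ missing $y$, and $y+P\in\mathfrak{B}_{M}$ is a neighborhood of $y$ that does not contain $0$.

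The main step, and where I expect the real work, is (i)$\Rightarrow$(ii). Given distinct $x,y\in M^{\bullet}$, the goal is to produce disjoint coprime cosets around them that avoid $0$. The assumption $J(M)=0$ translates, via the maximal submodule/maximal ideal correspondence, to $J(\bar{R})=0$; and $M$ being nonsimple rules out $\bar{R}$ being a field, since otherwise $M$ would be a simple $\bar{R}$-module. So $\bar{R}$ is a semiprimitive domain. Form the three nonzero ideals $I_{x}, I_{y}, I_{x-y}$ of $\bar{R}$; because $\bar{R}$ is a domain, their product is nonzero; because $J(\bar{R})=0$, this product is not contained in every maximal ideal, so some maximal $\mathfrak{m}\subseteq\bar{R}$ fails to contain $I_{x}I_{y}I_{x-y}$, and primality of $\mathfrak{m}$ then forces $I_{x},I_{y},I_{x-y}\not\subseteq\mathfrak{m}$ simultaneously. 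Setting $P:=\mathfrak{m}M$ yields a nonzero maximal submodule with $x,y,x-y\notin P$; then $x+P$ and $y+P$ are coprime cosets in $\mathfrak{B}_{M}$ avoiding $0$, and they are disjoint because $x-y\notin P+P=P$. This gives the required Hausdorff separation. The one delicate ingredient is the simultaneous-avoidance step, which rests on both the domain hypothesis (so that a product of nonzero ideals stays nonzero) and semiprimitivity (so that this product cannot be swallowed by the intersection of all maximal ideals).
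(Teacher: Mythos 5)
Your overall architecture follows the paper's: you derive meet-irreducibility from primality of $\operatorname{ann}(M)$ exactly as the paper does, your (i)$\Rightarrow$(ii) is the paper's argument transported to $\bar R=R/\operatorname{ann}(M)$ (take the product of the three colon ideals attached to $x$, $y$, $x-y$, use the domain property to keep it nonzero and semiprimitivity to find a maximal object missing it), and your (i)$\Rightarrow$(v) and (i)$\Leftrightarrow$(vi) are as in the paper. One caution in (i)$\Rightarrow$(ii): you set $P=\mathfrak{m}M$ for a maximal ideal $\mathfrak{m}$ of $\bar R$, but $\mathfrak{m}M$ need not be a proper (hence maximal) submodule. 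The paper sidesteps this by using $J(M)=0$ to choose a maximal \emph{submodule} $P$ not containing the nonzero submodule $(Rx:M)(Ry:M)R(x-y)$ and then exploiting that $(P:M)$ is prime; your version is repairable the same way.

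The genuine gap is in (iv)$\Rightarrow$(i). The claim that an indiscrete point $m$ ``sits in every nonempty open set'' is false: indiscreteness says only that the sole open set \emph{containing} $m$ is the whole space; it does not prevent a nonempty open set from omitting $m$. (In $G(\mathbb{Z}_8)$ the point $\overline{2}$ is indiscrete yet $\{\overline{1},\overline{5}\}$ is a nonempty open set missing it; or take $M=R=\mathbb{Z}_{(2)}$ and $m=2\in J(M)$, where $1+2\mathbb{Z}_{(2)}$ is open and omits $m$.) Hence ``no open set $T_0$-separates $m$ and $y$'' fails for a general $y\in M^{\bullet}$: $T_0$ only asks for an open set around \emph{one} of the two points that misses the other, and if $y\notin J(M)$ then $y+P$, for a maximal submodule $P$ with $y\notin P$, is such a set (since $m\in J(M)\subseteq P$ and $y\notin P$ give $m-y\notin P$, so $m\notin y+P$). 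To contradict $T_0$ you need \emph{two} distinct indiscrete points, i.e.\ $|J(M)^{\bullet}|\geq 2$, and establishing this is precisely the nontrivial content of the paper's proof of (iv)$\Rightarrow$(i): it shows $(Rm:M)/\operatorname{ann}(M)$ is a nonzero ideal of the domain $R/\operatorname{ann}(M)$, hence infinite, and then manufactures two distinct nonzero elements $x_i^{2}m\neq x_j^{2}m$ of $J(M)$ using primality of $\operatorname{ann}(M)$. Your proposal omits this step entirely, so the implication is not established.
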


\begin{proof}
Assume that $M$ is a multiplication $R$-module and $ann(M)$ is a prime ideal
of $R.\ $Let $K\cap N=0\ $for some submodules $K,N\ $of $M.\ $Then $(K\cap
N:M)=(K:M)\cap(N:M)=ann(M).\ $As $ann(M)\ $is a prime ideal, we have
$(K:M)=ann(M)\ $or $(N:M)=ann(M).\ $As $M\ $is a multiplication module, we
have $K=0\ $or $N=0,\ $that is, $M\ $is meet irreducible. Then by Proposition
\ref{pGol2}, we have the Golomb topology.

$(i)\Rightarrow(ii):\ $Suppose that $M\ $is Jacobson semisimple, that is,
$J(M)=0.$\ Take $m\neq n\in M^{\bullet}.\ $Since $M\ $is multiplication,
$(Rm:M),\ (Rn:M)\ $and $(R(m-n):M)$ are nonzero ideals. Also note that
$(Rm:M)(Rn:M)R(m-n)\neq0$ since $ann(M)\ $is a prime ideal and $M$ is
multiplication. Since $J(M)=0,\ $there exists a maximal submodule $P$ of
$M\ $such that $(Rm:M)(Rn:M)R(m-n)\nsubseteq P.\ $As $m,n\notin P,\ m+P\ $and
$n+P$ are coprime cosets. Since $m-n\notin P,\ $we have $(m+P)\cap
(n+P)=\emptyset.\ $Thus, $G(M)\ $is a $T_{2}$-space.

$(ii)\Rightarrow(iii)\Rightarrow(iv):$ It is clear.

$(iv)\Rightarrow(i):\ $Suppose that $G(M)\ $is a $T_{0}$-space. Let
$J(M)\neq0.\ $Choose a nonzero element $m\in J(M).\ $Since $(Rm:M)\neq
ann(M),\ (Rm:M)/ann(M)$ is a nonzero ideal of the integral domain
$R/ann(M).\ $Thus $(Rm:M)/ann(M)$ contains infinitely many elements. Let
$(Rm:M)/ann(M)=\{x_{i}+ann(M):x_{i}\in R\}_{i\in\Delta}.$\ Choose two distinct
nonzero element $x_{i}+ann(M),x_{j}+ann(M)\in(Rm:M)/ann(M)$ such that
$x_{i}+ann(M)\neq-x_{j}+ann(M).\ $Since $x_{i},x_{j}\in(Rm:M),\ $note that
$x_{i}M$ and $x_{j}M$ are contained in $Rm$. Now, we will show that $x_{i}%
^{2}m\neq x_{j}^{2}m.\ $Assume that $x_{i}^{2}m=x_{j}^{2}m.\ $Then we have
$(x_{i} ^{2}-x_{j}^{2})m=0\ $which implies that $(x_{i}^{2}-x_{j}^{2})x_{i}
M\subseteq(x_{i}^{2}-x_{j}^{2})Rm=0.\ $Thus, we conclude that $(x_{i}
^{2}-x_{j}^{2})x_{i}\in ann(M).\ $As $ann(M)$ is a prime ideal and
$x_{i}\notin ann(M)$, we have $x_{i}-x_{j}\in ann(M)$ or $x_{i}+x_{j}\in
ann(M).\ $Thus, we obtain $x_{i}+ann(M)=x_{j}+ann(M)$ or $x_{i}+ann(M)=-x_{j}
+ann(M)$ which is a contradiction. Thus we have $x_{1}^{2}m\neq x_{2}^{2}m\in
J(M)^{\bullet}.\ $Then by Proposition \ref{pind}, $x_{1}^{2}m\neq x_{2}^{2}m$
are indiscrete points of $G(M).\ $Thus, $G(M)\ $is not a $T_{0}$-space which
is a contradiction. Hence, $J(M)=0.\ $

$(v)\Rightarrow(vi):\ $Let $\widetilde{G(M)}\ $be a $T_{0}$-space. Then
$G(M)\ $is a $T_{0}$-space, by $(iv)\Leftrightarrow(i),\ $we have
$J(M)=0.\ $Again by Corollary \ref{czeroclosure}, we have $\overline
{\{0\}}=J(M)=\{0\}\ $which completes the proof.

$(vi)\Rightarrow(i):\ $Suppose that $\overline{\{0\}}=\{0\}$ in $\widetilde
{G(M)}.\ $By Corollary \ref{czeroclosure}, we know that $\overline
{\{0\}}=J(M).$ The rest is clear.

$(i)\Rightarrow(v):\ $Let $J(M)=0\ $and $m\neq n\in M.\ $If $m,n\neq
0,\ $similar argument in $(i)\Rightarrow(ii)\ $shows that $(m+P)\cap
(n+P)=\emptyset\ $for some maximal submodule $P$ of $M.\ $Now, assume that
$m=0,n\neq0.\ $Since $n\notin J(M),\ n\ $is not indiscrete point of
$\widetilde{G(M)}.\ $Thus there exists an open set $O\neq M$ such that $n\in
O$.\ Note that $0=m\notin O$ since $0=m$ is indiscrete point. Hence,
$\widetilde{G(M)}\ $is a $T_{0}$-space.
\end{proof}

\begin{acknowledgement} On behalf of all authors, the corresponding author states that there is no conflict of interest.
\end{acknowledgement}

\end{document}